\newtheorem{theorem}{Theorem}[section]
\newtheorem{corollary}[theorem]{Corollary}
\newtheorem{proposition}[theorem]{Proposition}
\newtheorem*{claim}{Claim}
\newcommand{\rinf}{\mathbb{R}^\infty}
\begin{document}

\title[Monoids acting by isometric embeddings]{A \v{S}varc-Milnor Lemma for \\ monoids acting by isometric embeddings}

\keywords{monoid, cancellative monoid, finitely generated, action, semimetric space, }
\subjclass[2000]{20M05; 20M30, 05C20}
\maketitle

\begin{center}

    ROBERT GRAY\footnote{
Centro de \'{A}lgebra da Universidade de Lisboa, Av.~Prof.~Gama Pinto, 2, 1649--003 Lisboa, Portugal.
Email \texttt{rdgray@fc.ul.pt}. This author's research is partially
supported by
FCT and FEDER, project POCTI-ISFL-1-143 of Centro de \'{A}lgebra da
Universidade de Lisboa, and by the project PTDC/MAT/69514/2006. This research
was partially conducted while this author was at the University
of St Andrews, where he was supported by an EPSRC Postdoctoral Fellowship.} \ and \ MARK KAMBITES\footnote{School of Mathematics, University of Manchester, Manchester M13 9PL, England. Email \texttt{Mark.Kambites@manchester.ac.uk}.
This author's research is supported by an RCUK Academic Fellowship.} \\
\end{center}

\begin{abstract}
We continue our programme of extending key techniques from geometric group
theory to semigroup theory, by studying monoids acting by isometric embeddings
on spaces equipped with asymmetric, partially-defined distance functions. The
canonical example of such an action is a cancellative monoid acting by
translation on its Cayley graph. Our main result is an extension of
the \v{S}varc-Milnor Lemma to this setting.
\end{abstract}

\section{Introduction}\label{sec_intro}

Over the past few decades, combinatorial group theory has been
increasingly influenced by its connections with geometry.
The resulting subject of \textit{geometric group theory} (see for
example \cite{Bowditch06,Bridson99,delaHarpe00})
is based on two principles. The first is that the use of
\textit{diagrams}
associated to a group can be a valuable aid to combinatorial reasoning with
generators and relations. The second is that a
group can be understood by studying way it acts, in a suitably controlled
way, upon a metric space. Arguably
the greatest power for understanding groups, though, comes from a synthesis
of these ideas, in which diagrams (and, by extension, groups themselves) are
endowed with a metric structure, and the groups made to act upon the resulting
spaces.

It is very natural to ask whether these geometric methods are particular to
groups, or if they have any potential for wider application. Finitely
generated semigroups and monoids are probably
the most obvious candidates for such a generalization, and indeed a
number of authors have employed geometric ideas in semigroup theory.
However, while some success has been enjoyed with ``diagrammatic'' techniques
(see for example \cite{Hoffmann06,Remmers80,Silva04}), the same cannot
yet be said for truly geometric methods, and
there is as yet no coherent subject of \textit{geometric semigroup theory}
studying
semigroups of isometries on metric spaces.
In fact, we contend that there are two fundamental obstructions to the
existence of such a subject.

The first concerns the way in which monoids act: a monoid acting
faithfully by bijections is necessarily group-embeddable, and monoids
in general are not group-embeddable. Hence, to study more general monoids
through their actions it is necessary to consider actions by more general
functions than just permutations. In the case of actions on spaces with
distance functions, this means that it is not sufficient to consider
actions by isometries. The second fundamental obstruction concerns the nature of the spaces acted
upon. In the group case, the essential philosophy (made precise in the
\textit{\v{S}varc-Milnor Lemma} \cite{Milnor68,Svarc55}) is that a group acting in
a suitably controlled way upon any metric space must resemble (more precisely,
be \textit{quasi-isometric} to) the space, and so properties of the group can be read off from properties of
the space and vice versa. In a monoid or semigroup, by contrast, distance is
neither symmetric (since there are no inverses) nor everywhere defined (since
there may be ideals). Hence, there is no hope that a general monoid will
``resemble'' a metric space. Instead, the development of a true
semigroup-theoretic analogue of geometric group theory will require the
study of spaces which have the flexibility to resemble monoids, that is,
spaces with asymmetric, partially-defined distance functions.

This paper forms part of an ongoing programme of
research, in which we seek to transfer techniques of geometric group theory to
semigroup theory, by replacing metrics with what we call
\textit{semimetrics}\footnote{Such
functions are ubiquitous in applied mathematics and arise with increasing
frequency also in pure mathematics, but terminology for them is not
standardized across the different areas in which they appear. Terms used
include \textit{premetric}, \textit{pseudo-metric}, \textit{quasi-metric}, \textit{quasi-semi-metric}
and \textit{extended quasi-metric}, but some of these are also used for other
generalizations of metrics.}.
In \cite{K_semimetric} we initiated this study, by introducing a natural
notion of quasi-isometry for these spaces, and proving an analogue
of the \v{S}varc-Milnor lemma for groups acting on them by isometries.
We also presented some applications in semigroup theory, arising
from the action of a \textit{Sch\"utzenberger group} on the corresponding
\textit{Sch\"utzenberger graph}. Here, we begin to extend this theory to cover
actions of \textit{monoids} on semimetric spaces. Specifically, we consider monoids
acting by \textit{isometric embeddings} on semimetric spaces. While still
not general enough to permit the study of monoids in absolute generality,
this setting does encompass many more monoids than just groups, including
most notably all \textit{cancellative} monoids. In future research we shall
explore the extent to which these methods can be extended yet further, to
consider yet more general maps and hence yet larger classes of monoids;
however, initial indications are that such results will become markedly more
technical once cancellativity conditions are dropped.

In addition to this introduction, this paper comprises three sections. In
Section~\ref{sec_semimetric} we briefly recall some definitions and
foundational results from \cite{K_semimetric}. We then consider actions
of monoids by isometric embeddings on semimetric spaces, identifying some key
``well-behavedness'' conditions. Section~\ref{sec_cayley} considers monoids
acting on their own Cayley graphs (viewed as semimetric spaces), and the
extent to which these actions satisfy the conditions of the previous section.
Finally, Section~\ref{sec_svarc} proves an analogue of the \v{S}varc-Milnor
lemma and gives some corollaries and example applications, including a
complete description of the quasi-isometry types of free products of free
monoids and finite groups.

\section{Semimetric Spaces and Monoid Actions}\label{sec_semimetric}

We denote by $\rinf$ the set
$\mathbb{R}^{\geq 0} \cup \lbrace \infty \rbrace$ of
non-negative real numbers with $\infty$ adjoined, equipped with the obvious
total order, addition and multiplication (leaving $0 \infty$ undefined).
A \emph{semimetric} on a set $X$ is a function
$d : X \times X \to \rinf$ satisfying:
\begin{itemize}
\item[(i)] $d(x,y)=0$ if and only if $x=y$; and
\item[(ii)] $d(x,z) \leq d(x,y) + d(y,z)$;
\end{itemize}
for all $x,y,z \in X$. A \textit{semimetric space} is a set equipped
with a semimetric. A point $x_0 \in X$ is called a
\textit{basepoint} for the space $X$ if
$d(x_0, y) \neq \infty$ for all $y \in X$. The space is called
\textit{strongly connected} if every point is a basepoint, that is,
if the distance function never takes the value $\infty$.

We define the distance between subsets of $X$ by
$$d(A,B) \ = \ \inf \lbrace d(a,b) \mid a \in A, b \in B \rbrace$$
for all $A, B \subseteq X$.

A \textit{path of length $n \in \mathbb{R}$ from $x$ to $y$} is a map $p : [0,n] \to X$
such that $p(0) = x$, $p(n) = y$ and $d(p(a), p(b)) \leq b-a$ for all
$0 \leq a \leq b \leq n$.
If $d(x, y) \neq \infty$ then a \textit{geodesic} from $x$ to $y$ is a path
of length $d(x,y)$ from $x$ to $y$. The semimetric space $X$ is called
\emph{geodesic} if for all $x,y \in X$ with $d(x,y) \neq \infty$ there
exists at least one geodesic from $x$ to $y$.

Let $x_0 \in X$ and let $r$ be a non-negative real number. The 
 \emph{out-ball} of radius $r$ based at $x_0$ is
\[
\overrightarrow{\mathcal{B}}_r(x_0) = \{ y \in X \mid d(x_0,y) \leq r \}.
\]
Dually, the \emph{in-ball} of radius $r$ based at $x_0$ is defined by
\[
\overleftarrow{\mathcal{B}}_r(x_0) = \{ y \in X \mid d(y,x_0) \leq r \},
\]
and the \emph{strong ball} of radius $r$ based at $x_0$ is
\[
\mathcal{B}_r(x_0) = \overrightarrow{\mathcal{B}}_r(x_0) \cap
 \overleftarrow{\mathcal{B}}_r(x_0).
\]
For $1 \leq \mu < \infty$, a subset $X'$ of $X$ is called $\mu$-\textit{quasi-dense} if every
point in $X$ is contained in the strong ball of radius $\mu$ around some
point in $X'$.

Let $f : X \to Y$ be a map between semimetric spaces. Then $f$ is an
\textit{isometric embedding} if $d(f(x),f(y)) = d(x,y)$ for all
$x, y \in X$; a surjective (and hence bijective) isometric embedding is
an \textit{isometry}. More generally, let 
$1 \leq \lambda < \infty$, $1 \leq \mu < \infty$ and $0 < \epsilon < \infty$ be constants.
The map $f$ is called a \emph{$(\lambda,\epsilon)$-quasi-isometric embedding},
and $X$ \textit{embeds quasi-isometrically in $Y$}, if
\[
\frac{1}{\lambda}\;d(x,y) - \epsilon \leq
d(f(x), f(y)) \leq \lambda d(x,y) + \epsilon
\]
for all $x,y \in X$.
If $f : X \to Y$ is a $(\lambda,\epsilon)$-quasi-isometric embedding
and its image is $\mu$-quasi-dense, then $f$ is 
called a \textit{$(\lambda,\epsilon,\mu)$-quasi-isometry}, and the spaces
$X$ and $Y$ are \textit{quasi-isometric}. Quasi-isometry
is an equivalence relation on the class of semimetric spaces
\cite[Proposition~1]{K_semimetric}.
A semimetric space is called \textit{quasi-metric} if it is quasi-isometric
to a metric space, or equivalently \cite[Proposition~2]{K_semimetric} if there are
constants $\lambda, \mu < \infty$ such that $d(x,y) \leq \lambda d(y,x) + \mu$
for all points $x$ and $y$.

Now let $M$ be a monoid acting by isometric embeddings on a semimetric
space $X$. We say that the action is \textit{cobounded} if there is a strong
ball $B$ of finite radius such that $(mB)_{m \in M}$ covers $X$.
We say that the action is \textit{outward proper} if for every out-ball $B$
of finite radius the set  
$\lbrace m \in M \mid d(B, mB) = 0 \rbrace$
is finite. Note that our definitions of outward proper and cobounded
coincide with the usual notions of proper and cobounded in the special
case of a group action on a metric space. Also, notice that
coboundedness of the action is sufficient to ensure (if we do not assume
this \textit{a priori}) that $M$ is acting as a monoid, that is, that the
identity element of $M$ acts as the identity function on $X$.

In addition to the preceding conditions, which are relatively straightforward
generalisations of conditions imposed in the group case, we will also need to
impose a fundamentally semigroup-theoretic restriction to ensure that the
right ideal structure of the monoid is reflected in the space. If $x_0 \in X$
is a basepoint, we say that the action is \textit{idealistic at $x_0$} if
\[
d(mx_0,nx_0) < \infty
\Rightarrow
nM \subseteq mM
\]
for all $m,n \in M$.
We say that the action is \textit{idealistic} if it is idealistic at some
basepoint. Notice that in the special case that $M$ is a group, $mM \subseteq nM$ for
all $m, n \in M$, so the action is idealistic exactly if the space has
a basepoint.

\section{Cayley Graphs of Monoids and Cancellative Monoids}\label{sec_cayley}

Let $M$ be a monoid generated by a finite set $S$. Then we may define
a semimetric on $M$ by setting $d_S(x,y)$ to be the shortest length
of a word $w$ over the generating set $S$ such that $xw = y$ in $M$, or
$\infty$ if there is no such word. With this notion of distance, the
natural action of $M$ on itself by left translation is an action by
\textit{contractions} (maps which do not increase distance). The action
is by isometric embeddings exactly if $M$ is left cancellative

The semimetric space $M$ is clearly not (unless $M$ is trivial) geodesic.
We extend $M$ to a
geodesic semimetric space $\Gamma_S(M)$ by ``stitching in'', for each
element $m \in M$ and generator $s \in S$, a copy of the open interval
$(0,1)$ between $m$ and $ms$, which we view as an \textit{edge} from $m$
to $mx$.
The semimetric is defined so that point $\mu$ in this interval is at distance
$\mu$ from $m$ in both directions, but at distance $1- \mu$ from $mx$ in
only one direction.

Formally, our new semimetric space, which we denote $\Gamma_S(M)$ and
call the \textit{(continuous) Cayley graph of $M$ with respect to $S$},
has point
set $M \cup (M \times S \times (0,1))$ and distance function defined by
\begin{itemize}
\item $d(m,n) = d_S(m,n)$ for all $m,n \in M$;
\item $d((m,(n,y,\nu)) = d(m,n) + \nu$ for all $m,n \in M$, $y \in S$, $\nu \in (0,1)$;
\item $d((m,x,\mu), n) = \min(\mu + d(m,n), (1-\mu) + d(mx, n))$ for all $m,n \in M$, $x \in S$, $\mu \in (0,1)$;
\item $d((m,x,\mu), (m,x,\nu)) = |\mu - \nu|$ for all $m \in M$, $x \in S$, $\mu, \nu \in (0,1)$;
\item $d((m,x,\mu), (n,y,\nu)) = d((m,x,\mu), n) + \nu$ for all $m, n \in M$, $x,y \in S$, $\mu, \nu \in (0,1)$ such
that $m \neq n$ or $x \neq y$.
\end{itemize}
Note that this semimetric is slightly different from that used for Cayley
graphs in \cite{K_semimetric}; the distinction is immaterial in the group case,
but very important when considering monoids, where the definition used here
is necessary to allow the following elementary result.

\begin{proposition}\label{prop_monoidcayleyinclusion}
The inclusion map from $M$ to $\Gamma_S(M)$ is an isometric embedding
and a quasi-isometry.
\end{proposition}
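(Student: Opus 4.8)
The plan is to handle the two assertions separately, observing that the isometric-embedding claim is essentially built into the definition of $\Gamma_S(M)$, so that the substantive work lies entirely in the quasi-density needed for the quasi-isometry claim.

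First I would dispatch the isometric embedding. Writing $\iota : M \to \Gamma_S(M)$ for the inclusion and $d$ for the distance on $\Gamma_S(M)$, the very first clause of the definition gives $d(\iota(m),\iota(n)) = d(m,n) = d_S(m,n)$ for all $m,n \in M$, so $\iota$ preserves distances exactly. Taking $\lambda = 1$ and any fixed $\epsilon$ with $0 < \epsilon < \infty$, the defining inequalities
\[
\tfrac{1}{\lambda} d_S(m,n) - \epsilon \leq d(\iota(m),\iota(n)) \leq \lambda d_S(m,n) + \epsilon
\]
then hold trivially, exhibiting $\iota$ as a $(1,\epsilon)$-quasi-isometric embedding.

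Next I would show that the image $M$ is $\mu$-quasi-dense for some finite $\mu$, and I expect $\mu = 1$ to suffice. Every point of $\Gamma_S(M)$ is either a vertex $m \in M$, which lies in the strong ball $\mathcal{B}_1(m)$ about itself, or an interior edge point $(m,x,\nu)$ with $m \in M$, $x \in S$, $\nu \in (0,1)$. For the latter I would read the relevant distances straight off the defining clauses: the out-distance $d(m,(m,x,\nu)) = d(m,m) + \nu = \nu$, and the in-distance $d((m,x,\nu),m) = \min(\nu + d(m,m),\, (1-\nu) + d(mx,m)) = \min(\nu,\, (1-\nu) + d(mx,m)) \leq \nu$. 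Since $\nu < 1$, both are at most $1$, so $(m,x,\nu) \in \mathcal{B}_1(m)$; hence $M$ is $1$-quasi-dense and $\iota$ is a $(1,\epsilon,1)$-quasi-isometry.

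The only step that requires genuine care is the in-distance computation, where the asymmetry of the semimetric forbids simply reading off a symmetric edge length: one must evaluate the minimum defining $d((m,x,\nu),m)$. The key observation is that this minimum is bounded above by its first argument $\nu$ regardless of the value of $d(mx,m)$, which is what makes the uniform bound $\mu = 1$ available even when $mx$ is far from, or not comparable with, $m$ in the ideal order.
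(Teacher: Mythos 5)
Your proposal is correct and follows essentially the same route as the paper: distance preservation is read off from the first clause of the definition of the semimetric on $\Gamma_S(M)$, and quasi-density is obtained by checking that each edge point $(m,x,\nu)$ lies in the strong ball of radius $1$ around $m$. Your explicit evaluation of the minimum defining $d((m,x,\nu),m)$ is exactly the detail the paper leaves as ``immediate.''
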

\begin{proof}
That the map preserves distances, and hence is an isometric embedding
and a quasi-isometric embedding, is immediate from the definition of the
semimetric in $\Gamma_S(M)$. Moreover, it is also immediate that any
point $(m,x,\mu)$ on an edge lies in the strong ball of radius $1$ around
$m$, so that $M \subseteq \Gamma_S(M)$ is
quasi-dense and the map is a quasi-isometry.
\end{proof}

Note that while the semimetric spaces $M$ and $\Gamma_S(M)$ depend upon
the choice of finite generating set $S$, their quasi-isometry type does not
\cite[Proposition~4]{K_semimetric}, and hence is an isomorphism invariant of
$M$. Thus, provided $M$ admits a finite generating set, it makes sense to
speak of a semimetric space being quasi-isometric to the abstract monoid $M$.

For any monoid $M$, we can extend the left translation action of $M$ on
itself to an action on $\Gamma_S(M)$, by defining $p(m,x,\mu) = (pm,x,\mu)$
for all $p,m \in M$, $x \in S$ and $\mu \in (0,1)$. Recall (from for
example \cite{Silva04}) that a monoid
$M$ has \textit{finite (right) geometric type} (also called \textit{bounded
indegree}) if for every $b, c \in M$ there are only finitely many elements $a$
satisfying $ab = c$. Note in particular that this condition is satisfied
by right cancellative monoids.

\begin{proposition}\label{prop_actiononcayley}
Let $M$ be a left cancellative monoid generated by a finite set $S$. Then
$M$ acts on $\Gamma_S(M)$ by isometric embeddings, and the action is
cobounded and idealistic at the identity of $M$. If $M$ has finite geometric
type then the action is outward proper.
\end{proposition}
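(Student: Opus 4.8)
The plan is to verify each of the four claimed properties in turn, drawing on the structure of $\Gamma_S(M)$ and the hypotheses on $M$.

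The plan is to verify the four assertions in turn; the first three are routine consequences of the definitions, while the last---outward properness---requires the real work.

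For the isometric embedding claim I would first check that left translation preserves $d_S$ on $M$ itself: if $mw = n$ then $(pm)w = pn$, giving $d_S(pm,pn) \le d_S(m,n)$, while left cancellativity lets one cancel $p$ from $p(mw) = pn$ to recover $mw = n$, yielding the reverse inequality. Since every distance in $\Gamma_S(M)$ is assembled from $d_S$-values together with the interval coordinates, and since left translation fixes the generator and coordinate of each edge point while acting injectively (again by left cancellativity) on the vertex set, each clause of the definition of the semimetric is preserved; hence every $p \in M$ acts by an isometric embedding.

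For coboundedness I would take $B = \strongball_1(1)$. The identity lies in $B$, and every edge point $(1,x,\mu)$ satisfies $d(1,(1,x,\mu)) = \mu < 1$ and $d((1,x,\mu),1) \le \mu < 1$, so $B$ contains $1$ together with the initial segment of every edge leaving $1$; translating by $m$ gives $m \in mB$ and $(m,x,\mu) = m(1,x,\mu) \in mB$, so $(mB)_{m\in M}$ covers $\Gamma_S(M)$. For idealistic-at-$1$, note first that $1$ is a basepoint since $S$ generates $M$, so every point is a finite distance from $1$; and if $d(m\cdot 1, n\cdot 1) = d_S(m,n) < \infty$ then $n = mw$ for some word $w$, whence $n \in mM$ and therefore $nM \subseteq mM$, as required.

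The substantial part is outward properness under finite geometric type. Fix an out-ball $B = \outball_r(x_0)$ of finite radius and let $V$ be the set of elements of $M$ lying in $B$; since reaching such a vertex requires a word of length at most $r$ read from one of the (at most two) vertices nearest $x_0$, and $S$ is finite, $V$ is finite. The key observation, which I would establish by a short case analysis on the distance formulas, is that whenever two points $a$ and $b'$ satisfy $d(a,b') < 1$, the vertex underlying $b'$ must equal either the vertex underlying $a$ or its product with the generator of the edge carrying $a$: in symbols, if $a$ sits on the edge based at $u$ with generator $x$, and $b'$ is based at $w$, then $w \in \{u, ux\}$. Now suppose $d(B,mB)=0$. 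As $0 < 1$, the infimum is witnessed by some $a \in B$ and $b' \in mB$ with $d(a,b') < 1$; writing $a$ over a base vertex $u \in V$ and writing $b' = mz$ with $z \in B$, so that $b'$ is based at $mv$ for some $v \in V$, the observation forces $mv = u$ or $mv = ux$ for some $x \in S$. Since $V$ and $S$ are finite there are only finitely many such equations $mv = c$, and finite geometric type guarantees each has only finitely many solutions $m$; hence $\{m : d(B,mB)=0\}$ is finite.

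The one delicate point, and the place I would take most care, is the case analysis behind that observation: one must run through every combination of $a$ and $b'$ being a vertex or an interior edge point, using that distinct edges meet only at shared vertices and that inter-vertex distances are integers, so that a distance strictly below $1$ can be produced only by coincidence of the relevant endpoints.
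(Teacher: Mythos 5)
Your treatment of the isometric-embedding, coboundedness and idealistic claims matches the paper's proof (which compresses the first of these to ``follows easily from left cancellativity''); your explicit two-sided verification of $d_S(pm,pn)=d_S(m,n)$ is exactly the intended argument. For outward properness your route is organised differently from the paper's but rests on the same endgame: reduce $d(B,mB)=0$ to finitely many equations $mv=c$ with $v,c$ in a finite set of vertices, then invoke finite geometric type. The paper first replaces an arbitrary out-ball by one centred at the identity $e$, passes to a slightly larger ball $B$ so that $d(C,mC)=0$ forces $B\cap mB\neq\emptyset$, and then projects the witnesses to their base vertices; you instead work with the given ball and prove directly that witnesses at distance $<1$ force adjacency of base vertices. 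Your adjacency lemma is correct in all cases of the case analysis, and your version is arguably more direct.

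There is, however, one small slip you should repair: you take $V=B\cap M$ and then assert that the base vertex of every point of $B$ lies in $V$. This fails when the centre $x_0$ of the out-ball is an interior edge point and the radius $r$ is small: a point $z=(v,y,\nu)$ on the \emph{same} edge as $x_0$ satisfies $d(x_0,z)=|\mu-\nu|$, which can be at most $r$ even though $d(x_0,v)>r$, so $z\in B$ while $v\notin B$. (For points on any \emph{other} edge, and for $x_0$ a vertex, your claim $d(x_0,v)\leq d(x_0,z)$ does hold.) The fix is trivial --- enlarge $V$ by the single base vertex of $x_0$, or first perform the paper's reduction to out-balls centred at $e$, which is precisely what lets the paper conclude ``all paths from $e$ to $b_m$ and $c_m$ must lead through $b'$ and $c'$, so $b',c'\in B$.'' With that one-line patch your argument is complete.
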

\begin{proof}
That the action is by isometric embeddings follows easily from the
definitions and the fact that $M$ is left cancellative.

Let $e$ denote the identity of $M$. Then the strong ball of radius $1$
around $e$ contains $e$ and all points of the form $(e,x,\mu)$ with
$x \in S$ and $\mu \in (0,1)$. Clearly every point in $\Gamma_S(M)$ is a
translate of such a point by an element of $M$, so the action is cobounded.

If $m, n \in M$ are such that $d(me,ne) = d_S(m,n) < \infty$ then there is
a word $w$ over the generating set $S$ such that $n = mw$. But now 
$n \in mM$ so that $nM \subseteq mM$. Thus, the action is idealistic at $e$.

Now suppose $M$ has finite geometric type. Since $e$ is a
basepoint, every out-ball of finite radius is contained in an out-ball of
finite radius around $e$, so it will suffice to assume $C$ is an out-ball
of finite radius around $e$, and show that the set
$\lbrace m \in M \mid d(C,mC = 0) \rbrace$
is finite. Let $B$ be any out-ball around $e$ of strictly larger radius than
$C$. Then if $m$ is such that $d(C,mC) = 0$ it is easily seen that
$B \cap mB \neq \emptyset$, so
it will suffice to show that
$$Q = \lbrace m \in M \mid B \cap mB \neq \emptyset \rbrace$$
is finite. Suppose $m \in Q$. Then we may choose $b_m,c_m \in B$ such that
$mb_m = c_m$. If either $b_m$ or $c_m$ is not in
$M$ then it follows from the definition of the action that we have
$b_m = (b', x, \mu)$ and $c_m = (c', x, \mu)$ for some $x \in S$, $\mu \in (0,1)$
and $b', c' \in M$ with $mb' = c'$. Moreover, since all paths from $e$ to $b_m$
and $c_m$ must lead through $b'$ and $c'$, we have $b', c' \in B$. Thus, replacing
$b_m$ and $c_m$ with $b'$ and $c'$ if necessary, we may assume without loss of
generality that $b_m, c_m \in B \cap M$. Now by the finite geometric type
assumption, there are only finitely many elements $x \in M$ satisfying $xb_m = c_m$.
Hence, the map
$$Q \to (B \cap M) \times (B \cap M), \ m \mapsto (b_m, c_m)$$
is finite-to-one. But since $B$ has finite radius and $M$ is finitely
generated, it is easily seen that $B \cap M$ is finite, and so
$Q$ is finite.
\end{proof}

One case of the above proposition deserves particular note.
\begin{corollary}\label{cor_cancaction}
A cancellative monoid acts outward properly, coboundedly and idealistically
by isometric embeddings on its Cayley graph.
\end{corollary}

\section{\v{S}varc-Milnor Lemma for Isometric Embeddings}\label{sec_svarc}

The aim of this section is to establish the following theorem, which is an
extension of the \v{S}varc-Milnor Lemma \cite{Milnor68,Svarc55} to the
setting of monoids acting by isometric embeddings on semimetric spaces. The
proof is adapted from our proof of a corresponding result for groups acting
by isometries on semimetric spaces \cite{K_semimetric}, which in turn is
based on the standard proof for groups acting on metric spaces (see
for example \cite{delaHarpe00}). However, extra steps are
required to handle the extra complications arising from the right ideal
structure of the monoid and the directedness of the space acted upon.

\begin{theorem}[\v{S}varc-Milnor Lemma for Isometric Embeddings]
\label{thm_svarc}
Let $M$ be a monoid acting idealistically, outward properly and coboundedly
by isometric embeddings on a geodesic semimetric space $X$. Then $M$ is
finitely generated and quasi-isometric to $X$.
\end{theorem}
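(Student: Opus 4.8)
The plan is to fix a basepoint $x_0$ at which the action is idealistic and to prove that the orbit map $\phi : M \to X$, $m \mapsto m x_0$, is a quasi-isometry once $M$ is equipped with the word metric $d_S$ of a finite generating set $S$ built from the geometry. The single identity I would use throughout is that each $m$ acts by an isometric embedding, so distances between orbit points may be shifted: $d(mux_0, mvx_0) = d(ux_0, vx_0)$. I would begin by establishing that the orbit $Mx_0$ is $\mu$-quasi-dense for some finite $\mu$: coboundedness supplies a strong ball whose translates cover $X$, and translating a point $y$ back into this ball controls both $d(\,\cdot\,,y)$ and $d(y,\cdot)$ to the corresponding orbit point. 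The out-direction is immediate from the basepoint property of $x_0$; the in-direction is the delicate part, and is where the covering ball and the basepoint must be chosen compatibly so that returning to the orbit costs only a bounded amount. Setting $R = 2\mu + 1$, I then define $S = \{\, t \in M \mid d(x_0, t x_0) \leq R \,\}$. To see $S$ is finite, put $C = \overrightarrow{\mathcal{B}}_R(x_0)$, an out-ball of finite radius; for $t \in S$ we have $tx_0 \in C$ by definition of $S$ and $tx_0 \in tC$ since $x_0 \in C$, so $C \cap tC \neq \emptyset$, whence $d(C, tC) = 0$. Thus $S \subseteq \{\, m \mid d(C, mC) = 0 \,\}$, which is finite by outward properness.

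Next I would prove generation together with the lower distance bound. Fix $m, n \in M$ with $d(m x_0, n x_0) = L < \infty$ and, using that $X$ is geodesic, sample a geodesic from $m x_0$ to $n x_0$ at unit intervals, giving $m x_0 = p_0, p_1, \dots, p_k = n x_0$ with $k = \lceil L \rceil$ and $d(p_i, p_{i+1}) \leq 1$. By quasi-density choose $m_i$ with $p_i$ at strong distance at most $\mu$ from $m_i x_0$, taking $m_0 = m$ and $m_k = n$. Then for each $i$,
\[
d(m_i x_0, m_{i+1} x_0) \;\leq\; \mu + 1 + \mu \;=\; R \;<\; \infty.
\]
This is the crucial moment: idealism at $x_0$ forces $m_{i+1} M \subseteq m_i M$, so $m_{i+1} = m_i t_i$ for some $t_i \in M$, and since $d(x_0, t_i x_0) = d(m_i x_0, m_i t_i x_0) = d(m_i x_0, m_{i+1} x_0) \leq R$, in fact $t_i \in S$. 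Hence $n = m\, t_0 t_1 \cdots t_{k-1}$ is an $S$-word of length $k$. Taking $m = e$ (and noting $d(x_0, n x_0) < \infty$ always, as $x_0$ is a basepoint) shows $S$ generates $M$, so $M$ is finitely generated; in general it gives $d_S(m, n) \leq k \leq d(m x_0, n x_0) + 1$.

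For the reverse inequality, if $d_S(m,n) = \ell$ then $n = m s_1 \cdots s_\ell$ with each $s_j \in S$, and the triangle inequality together with the shifting identity yields
\[
d(m x_0, n x_0) \;\leq\; \sum_{j=1}^{\ell} d(x_0, s_j x_0) \;\leq\; R\, \ell \;=\; R\, d_S(m,n).
\]
Moreover $d(m x_0, n x_0) = \infty$ exactly when $n \notin mM$, which by the contrapositive of idealism is exactly when $d_S(m,n) = \infty$; so the two distances vanish together and are infinite together. Combining this with the bound of the previous paragraph shows that $\phi$ is an $(R,1)$-quasi-isometric embedding, and since its image $M x_0$ is $\mu$-quasi-dense, $\phi$ is a quasi-isometry. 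As $M$ with $d_S$ is quasi-isometric to $\Gamma_S(M)$ by Proposition~\ref{prop_monoidcayleyinclusion}, it follows that $M$ is finitely generated and quasi-isometric to $X$.

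I expect the main obstacle to be the interaction between the asymmetry of the semimetric and the right-ideal structure of $M$. In the group case the sampled points $m_i x_0$ lie in a single orbit that may be traversed in both directions, whereas here the only device converting the geometric proximity $d(m_i x_0, m_{i+1} x_0) < \infty$ into the algebraic factorisation $m_{i+1} = m_i t_i$ is idealism; guaranteeing that consecutive orbit points are genuinely joined by right multiplications, rather than merely being close, is the essential new ingredient. The second, more technical, difficulty is controlling the in-direction in the proof of quasi-density, since the basepoint hypothesis bounds only the distances out of $x_0$; this is precisely what dictates the careful choice of covering ball in the first step.
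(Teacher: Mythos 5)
Your overall architecture matches the paper's: an orbit map $m \mapsto mx_0$ at a basepoint where the action is idealistic, a finite generating set extracted via outward properness, a geodesic sampled at bounded intervals whose sample points are pushed onto the orbit, and idealism used to convert the finite distances $d(m_i x_0, m_{i+1} x_0) < \infty$ into factorisations $m_{i+1} = m_i t_i$. Where you genuinely streamline the argument is the definition of the generating set: by taking $S = \{t \mid d(x_0, tx_0) \leq R\}$ with $R = 2\mu+1$ matched to the bound produced by the chain, membership of each $t_i$ in $S$ is immediate, and your finiteness argument (that $tx_0 \in C \cap tC$ for $C = \overrightarrow{\mathcal{B}}_R(x_0)$, so outward properness applies) is correct. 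The paper instead takes $S = \{m \mid d(B,mB)=0\}$ and must therefore prove a ``gap'' lemma (its two unnumbered Claims, with the auxiliary set $Q$ and constants $r$ and $l$) showing that $d(B,hB) < r$ forces $d(B,hB)=0$; your choice of $S$ eliminates all of that machinery. The reverse inequality, the matching of the infinite distances on both sides, and the final assembly are all fine, modulo a harmless off-by-one when $d(mx_0,nx_0)=0$ but $m \neq n$.

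The one genuine gap is the step you yourself flag as ``delicate'' and then do not carry out: the $\mu$-quasi-density of the orbit $Mx_0$, equivalently that the covering strong ball may be taken to be based at $x_0$. Coboundedness hands you a strong ball $D$ of radius $\rho$ based at some point $z_0$ with no stated relation to $x_0$; for $y = md \in mD$ you get $y$ within strong distance $\rho$ of $mz_0$, not of $mx_0$, and to replace $z_0$ by $x_0$ you need $\sup_{d\in D} d(d, x_0) < \infty$ --- precisely the ``in-direction'' --- which does not follow from $x_0$ being a basepoint, since that hypothesis only bounds distances \emph{out of} $x_0$. There is no ``compatible choice'' to be made here; something must be proved. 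The paper's device is: since the translates of $D$ cover $X$, there exist $g \in M$ and $x_0' \in D$ with $gx_0' = x_0$; then for any $p \in X$ one has $d(x_0', p) = d(gx_0', gp) = d(x_0, gp) < \infty$, so $x_0'$ is itself a basepoint, whence $d(d, x_0) \leq d(d, x_0') + d(x_0', x_0) \leq 2\rho + d(x_0', x_0) < \infty$ for all $d \in D$, and $D$ sits inside a strong ball of finite radius based at $x_0$. Everything downstream of your quasi-density claim is sound, so inserting this argument completes your proof.
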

\begin{proof}
Let $x_0 \in X$ be a basepoint such that the action is idealistic at
$x_0$. Since the action is cobounded, we may choose a strong
ball $D$ of finite radius such that $(mD)_{m \in M}$ covers $X$. In
particular, there exists $g \in M$ and $x_0' \in D$ with $x_0 = g x_0'$. Let
$p \in X$ be arbitrary. Since the action is by isometric embeddings and
$x_0$ is a basepoint, we have 
\[
d(x_0',p) = d(gx_0', gp) = d(x_0, gp) < \infty.
\] We conclude that $x_0'$ is a basepoint. Since
$x_0$ and $x_0'$ are both basepoints, and $D$ is a strong ball of finite
radius containing $x_0'$ whose translates cover $X$, it follows easily that
there is a strong ball $B \supseteq D$ of finite radius based at $x_0$
whose translates cover $X$. Let $R$ be the radius of $B$.

Now let
\[
S = \{ m \in M \mid d(B,mB) = 0 \}.
\]
Since the strong-ball $B$ is contained in an out-ball of the same radius, and
$M$ is acting outward property, the set $S$ is finite. Clearly $e \in S$,
where $e$ denotes the identity element of $M$.  

Let $C = \overrightarrow{\mathcal{B}}_{5R}(x_0)$, noting that $B \subseteq C$
and define
\[
Q = \{ mB \mid d(B, mB) \neq 0 \ \mbox{and} \ d(C,mB)=0 \}.
\]
Note that $Q$ is finite, since it is contained in $\{ mB \mid d(C,mC)=0 \}$,
which is finite since the action is outward proper. Hence, we may choose
a positive real number $r$ such that $r < R$ and $r < d(B,mB)$ for
every $mB \in Q$.

\begin{claim}
For all $h \in M$ if $d(B,hB)<r$ then $d(B,hB)=0$.
\end{claim}
\begin{proof}[Proof of Claim]
Suppose, seeking a contradiction, that $d(B,hB)<r$ but $d(B,hB) \neq 0$. Since   $d(B,hB)<r$ there exist $u \in B$ and $v \in hB$ with $d(u,v)<r$. Since $u \in B$ we have $d(x_0,u) \leq R$. Therefore
\[
d(x_0,v) \leq d(x_0,u) + d(u,v) < R + r \leq 2R < 5R
\]
so that $v \in C = \overrightarrow{\mathcal{B}}_{5R}(x_0)$.
Thus $v \in C \cap hB$ so $d(C,hB)=0$ and $hB \in Q$. But by the choice
of $r$ it now follows that $r < d(B,hB)$, giving the required contradiction
and completing the proof of the claim.
\end{proof}

\begin{claim}
For all $m,n \in M$, if $d(mB,nB) < r$ then $n = mu$ for some $u \in S$.
\end{claim}
\begin{proof}[Proof of Claim]
Since $d(mB,nB) < r < \infty$, we may choose $a, b \in B$ such that
$d(ma, nb) < r$. Now since $B$ is a strong ball of radius $R$, and the
action is distance-preserving, we have
\begin{align*}
d(m x_0, n x_0) &\leq d(m x_0, ma) + d(ma, nb) + d(nb, n x_0) \\
&= d(x_0, a) + d(ma,nb) + d(b, x_0) \\
&\leq R + r + R \\
&< \infty.
\end{align*}
Since the action is idealistic at $x_0$, this means that there exists
$u \in M$ with $mu = n$.

Now using again the fact that the action is distance preserving, we have
\begin{eqnarray*}
d(B,uB) 
 & = &
\inf \{  d(y,uz) : y, z \in B \} \\
 & = &
\inf  \{ d(my, muz) : y, z \in B \} \\
 & = &
d(mB, muB) \\
 & = &
d(mB, nB) \\
 & < &
r. 
\end{eqnarray*}
Hence, by the previous claim, we have $d(B,uB) = 0$, which by definition
gives $u \in S$, completing the proof of the claim. 
\end{proof}

Now choose a positive real number $l < r$. We claim that $S$ generates $M$ and that for all $m \in M$
\[
d_S(e,m) \leq \frac{1}{l}d(x_0,m x_0) +1
\]
where $e$ is the identity element of the monoid $M$.
To see this, let $m \in M$ be arbitrary. Since $x_0$ is a basepoint,
$d(x_0, mx_0)$ is finite. Since the semimetric space $X$ is
geodesic, there is geodesic from $x_0$ to $m x_0$, that is, a map
$p : [0, d(x_0, m x_0)] \to X$ such that $p(0) = x_0$, $p(d(x_0, mx_0)) = m x_0$
and $d(p(a), p(b)) \leq b - a$ for all
$0 \leq a \leq b \leq d(x_0, mx_0)$. Let $k$ be the integer part of
$\frac{1}{l} d(x_0, mx_0)$,
and for $1 \leq i \leq k$ define $x_i = p(il)$. Then for $0 \leq i < k$.
we have
$$d(x_i,x_{i+1}) = d(p(il), p((i+1)l)) \leq (i+1)l - il = l.$$
If we set $x_{k+1} = m x_0$ then recalling the definition of $k$ we also have
$$d(x_k,x_{k+1}) = d(p(kl), p(d(x_0, mx_0)) \leq d(x_0, mx_0) - kl < l.$$
Since the translates of
$B$ cover the space $X$, we may choose $m_0, \dots, m_{k+1} \in M$ such that
each $x_i \in m_i B$. Clearly, we may assume $m_0 = e$ and $m_{k+1} = m$.

Now for $0 \leq i \leq k$ 
\[
d(m_iB, m_{i+1}B) \leq d(x_i, x_{i+1}) \leq l < r,
\]
and so by the above claim there exists $s_i \in S$ with $m_{i+1} = m_i s_i$. But then
\[
m 
= m_{k+1}
= m_k s_k
= m_{k-1} s_{k-1} s_k
= \ldots = m_0 s_0 s_1 \ldots s_k
= s_0 s_1 \ldots s_k \in \langle S \rangle
\]
since $m_0 = e$.

We have written an arbitrary element $m \in M$ as a product of elements of $S$,
which proves the claim that $S$ generates the monoid $M$. 
Moreover, we have written $m$ as a product of $k+1$
generators from $S$, and $k$ was defined to be the integer part of
$\frac{1}{l} d(x_0, m x_0)$, so
\begin{align}
\label{eqn_bound}
d_S(e,m) \leq k+1 \leq \frac{1}{l} d(x_0,m x_0) + 1
\end{align}

Now let $m_1, m_2 \in M$ be arbitrary. If $d_S(m_1,m_2) = \infty$ then
by definition $m_2 M$ is not contained in $m_1 M$. Since the action
is idealistic at $x_0$, this means that
$d(m_1 x_0, m_2 x_0) = \infty$. In particular, we have
\[
d_S(m_1,m_2) \leq \frac{1}{l} d(m_1 x_0, m_2 x_0) + 1.
\]
On the other hand, if $d_S(m_1,m_2) < \infty$ then we can write $m_2 = m_1 n$ for some $n \in M$. Since
$M$ acts on $X$ by isometric embeddings and on itself by contractions,
applying equation \eqref{eqn_bound} we obtain
\begin{eqnarray*}
d_S(m_1,m_2) 	 & = &     	 d_S(m_1, m_1n) \\ 
				 & \leq &	 d_S(e,n) \\
				& \leq 	& \frac{1}{l}d(x_0,n x_0) +1 \\
				& = 		& \frac{1}{l}d(m_1 x_0, m_1 n x_0) +1 \\ 	
				& = 		& \frac{1}{l}d(m_1 x_0, m_2 x_0) +1. 
\end{eqnarray*}

Now let $\lambda = \max \{ d(x_0, sx_0) \mid s \in S \}$; since $x_0$ is a
basepoint and $S$ is finite, this maximum exists and is finite.
We claim that for all $m,n \in M$ we have 
\[
d(mx_0, nx_0) \leq \lambda d_S(m,n).
\]
Indeed, when $d_S(m,n) = \infty$ this is obviously true. Otherwise we can
write $n = m s_1 \ldots s_k$ where $k = d_S(m,n)$ and $s_1, \dots, s_k \in S$.
Now applying the triangle inequality and the fact that the action is by isometric embeddings we obtain
\begin{eqnarray*}
				& 		& d(mx_0, nx_0) 	\\
				& = 		& d(mx_0, ms_1 \ldots s_k x_0) \\
				& = 		& d(x_0, s_1 \ldots s_k x_0) \\
				& \leq 	& d(x_0, s_1 x_0) + d(s_1 x_0, s_1 s_2 x_0) + \ldots + d(s_1 s_2 \ldots s_{k-1} x_0, s_1 s_2 \ldots s_{k-1} s_k x_0) \\
				& = 		& d(x_0, s_1 x_0) + d(x_0, s_2 x_0) + \ldots + d(x_0, s_k x_0) \\
				& \leq 	& \lambda k = \lambda d_S(m,n). 
\end{eqnarray*}

Now consider the mapping $f: M \rightarrow X$ defined by $m \mapsto m x_0$. It follows from the observations above that
\[
d_S(m_1,m_2) \leq \frac{1}{l} d(f(m_1), f(m_2)) +1
\]
and also
\[
d(f(m_1),f(m_2)) \leq \lambda d_S(m_1,m_2)
\]
for all $m_1, m_2 \in M$. Moreover, given $x \in X$, since $(\alpha B)_{\alpha \in M}$ covers $X$ we conclude that there exists $h \in M$ with $x \in hB$, and thus
\[
\max(d(f(h),x),d(x,f(h))) \leq R.
\]
Hence $M$ and $X$ are quasi-isometric. 
\end{proof}

Combining Theorem~\ref{thm_svarc} with Proposition~\ref{prop_actiononcayley}
yields the following characterisation of the property of finite generation
for left cancellative monoids of finite geometric type (and so in particular
for cancellative monoids).

\begin{corollary}\label{cor_fingenchar}
A left cancellative monoid of finite geometric type is finitely generated if and only if it acts
outward properly, coboundedly, and idealistically by isometric embeddings on
a geodesic semimetric space.
\end{corollary}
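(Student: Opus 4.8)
The plan is to establish the two implications separately, each as a direct consequence of a result proved above; I anticipate no real difficulty, since the corollary is essentially a repackaging of Proposition~\ref{prop_actiononcayley} and Theorem~\ref{thm_svarc}, and the only point needing attention is the bookkeeping of which hypotheses are invoked in which direction.

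For the forward (\emph{only if}) direction I would assume $M$ is finitely generated and fix a finite generating set $S$. Proposition~\ref{prop_actiononcayley} then supplies an action of $M$ on the continuous Cayley graph $\Gamma_S(M)$ by isometric embeddings which is cobounded and idealistic at the identity; the standing hypothesis that $M$ has finite geometric type yields, through the final clause of that same proposition, that the action is moreover outward proper. Since $\Gamma_S(M)$ is geodesic by construction, this action exhibits all the properties demanded by the statement, completing the direction.

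For the backward (\emph{if}) direction I would simply appeal to Theorem~\ref{thm_svarc}: an action of $M$ that is idealistic, outward proper and cobounded by isometric embeddings on a geodesic semimetric space forces $M$ to be finitely generated (indeed quasi-isometric to the space, though only finite generation is needed here). The main thing I would flag is the asymmetry of the two directions: left cancellativity and finite geometric type are used only in the forward implication, to build the Cayley-graph action, whereas the backward implication is valid for an arbitrary monoid admitting such an action. Making this explicit clarifies that the genuine content of the corollary lies in combining the two earlier results, and that the added hypotheses on $M$ are there precisely to guarantee that the canonical Cayley-graph action has all the required features.
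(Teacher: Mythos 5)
Your proposal is correct and matches the paper exactly: the corollary is stated there as an immediate consequence of combining Proposition~\ref{prop_actiononcayley} (for the forward direction, using left cancellativity and finite geometric type to get the required action on the geodesic space $\Gamma_S(M)$) with Theorem~\ref{thm_svarc} (for the converse). Your remark that the backward implication needs none of the extra hypotheses on $M$ is a correct and worthwhile clarification.
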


\begin{corollary}
Let $M$ be a finitely generated monoid. Then $M$ is a group if and only if
$M$ acts outward properly, coboundedly and idealistically by isometric
embeddings on a quasi-metric space. If $M$ is a group then every semimetric
space on which it acts outward properly, coboundedly and idealistically by 
isometric embeddings is quasi-metric.
\end{corollary}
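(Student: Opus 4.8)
The plan is to prove the two halves separately, reducing everything to two facts: that being quasi-metric is invariant under quasi-isometry (immediate from \cite[Proposition~1]{K_semimetric} and the definition of quasi-metric as quasi-isometry to a metric space), and that the Cayley graph of a finitely generated group is quasi-metric. First I would dispose of the easy implication in the biconditional: an appropriate action on a quasi-metric space forces $M$ to be a group, and this needs neither that $X$ be geodesic nor outward properness. Let $x_0$ be a basepoint at which the action is idealistic, and recall that coboundedness guarantees the identity $e$ acts as the identity map, so $e x_0 = x_0$. Since $x_0$ is a basepoint, $d(x_0, m x_0) < \infty$ for every $m$, and since $X$ is quasi-metric there are constants $\lambda,\mu$ with $d(m x_0, x_0) \le \lambda\, d(x_0, m x_0) + \mu < \infty$. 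Applying the idealistic condition with $n = e$ then yields $M = eM \subseteq mM$, so $mM = M$ and in particular $m$ has a right inverse. A routine monoid argument (if $mm' = e$ and $m'm'' = e$ then $m = m''$, whence $m'm = e = mm'$) shows that a monoid in which every element has a right inverse is a group, so $M$ is a group.

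For the converse implication and the final sentence I would first record that the Cayley graph of a finitely generated group is quasi-metric. Writing $|h|_S$ for the length of a shortest word over $S$ representing $h$, so that $d_S(x,y) = |x^{-1}y|_S$, and setting $N = \max_{s \in S} |s^{-1}|_S$ (finite since $S$ is finite and generates $M$), the estimate $|h^{-1}|_S \le N|h|_S$ gives $d_S(x,y) \le N\, d_S(y,x)$; thus the discrete Cayley graph satisfies the quasi-metric inequality, and by Proposition~\ref{prop_monoidcayleyinclusion} the continuous graph $\Gamma_S(M)$ is quasi-isometric to it, hence also quasi-metric. Now if $M$ is a group it is in particular cancellative, so Corollary~\ref{cor_cancaction} supplies an outward proper, cobounded and idealistic action by isometric embeddings on $\Gamma_S(M)$, which is geodesic and, by the above, quasi-metric. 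This exhibits the required quasi-metric space and completes the biconditional.

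It then remains to prove the strengthening in the final sentence, that when $M$ is a group \emph{every} space $X$ carrying such an action is quasi-metric. Here I would invoke the main theorem: by Theorem~\ref{thm_svarc}, $M$ is quasi-isometric to $X$. Since $M$ is a group its Cayley graph is quasi-metric by the previous paragraph, and since $X$ is quasi-isometric to $M$, the quasi-isometry-invariance of the quasi-metric property forces $X$ to be quasi-metric as well.

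The hard part is precisely this last step, and it is subtler than it appears: Theorem~\ref{thm_svarc} requires $X$ to be \emph{geodesic}, and this hypothesis cannot be dropped. Indeed, take $M = \mathbb{Z}$ acting on the set $\mathbb{Z}$ by translation, with the translation-invariant semimetric $d(x,y) = \rho(y-x)$ where $\rho(n) = n$ for $n \ge 0$ and $\rho(n) = \sqrt{-n}$ for $n < 0$. One checks $\rho$ is subadditive and proper, so the action is by isometric embeddings and is cobounded, outward proper, and (automatically, for a group) idealistic; yet $\rho(n)/\rho(-n) \to \infty$, so this non-geodesic space is \emph{not} quasi-metric. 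Consequently the final sentence must be read with $X$ geodesic, as in the companion Corollary~\ref{cor_fingenchar}, and the crux of the argument is the passage via Theorem~\ref{thm_svarc} from the intrinsic geometry of $X$ to that of $M$, where geodesicity is essential.
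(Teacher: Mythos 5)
Your proposal is correct, and in one place it is sharper than the paper's own argument. For the implication ``a suitable action on a quasi-metric space forces $M$ to be a group'' the paper feeds the hypotheses into Theorem~\ref{thm_svarc} to get $M$ quasi-isometric to $X$, hence quasi-metric, and then quotes \cite[Proposition~8]{K_semimetric} to conclude that $M$ is right simple and therefore a group; note that this obliges the paper to assume $X$ geodesic (its proof opens with ``geodesic quasi-metric space'' although the statement of the corollary does not say so). Your direct argument --- $d(x_0,mx_0)<\infty$ since $x_0$ is a basepoint, hence $d(mx_0,x_0)<\infty$ by quasi-metricity, hence $eM\subseteq mM$ by idealism, so every element has a right inverse --- is more elementary, uses neither geodesicity nor outward properness, and so proves that implication exactly as stated. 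The converse and the final sentence follow the paper's route (Corollary~\ref{cor_cancaction}, then Theorem~\ref{thm_svarc} and quasi-isometry invariance of quasi-metricity), with the added value that you verify explicitly that a finitely generated group's Cayley graph is quasi-metric, a fact the paper uses implicitly. Your caveat about the last sentence is well taken: the example of $\mathbb{Z}$ with $d(x,y)=\rho(y-x)$, $\rho(n)=n$ for $n\ge 0$ and $\rho(n)=\sqrt{-n}$ for $n<0$, does satisfy all the stated hypotheses (subadditivity of $\rho$ checks out case by case, out-balls are finite so the action is outward proper, coboundedness is clear, and idealism is automatic for a group acting on a strongly connected space) while being neither geodesic nor quasi-metric; so the final sentence as literally written fails, and the paper's proof of it silently invokes the geodesic hypothesis of Theorem~\ref{thm_svarc}. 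Flagging that, rather than papering over it, is the right call.
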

\begin{proof} 
Suppose $M$ acts properly, coboundedly and
idealistically by isometric embeddings on a geodesic quasi-metric space $X$.
Then by 
Theorem~\ref{thm_svarc}, $M$ is quasi-isometric to $X$. By
\cite[Corollary~1]{K_semimetric},
quasi-metricity is a quasi-isometry invariant, so it follows that $M$
itself is quasi-metric. By \cite[Proposition~8]{K_semimetric} this means
that $M$ is right simple, but a right simple monoid must be a group.

Conversely, if $M$ is a group then it is certainly cancellative, so by
Corollary~\ref{cor_cancaction} it admits an outward proper, cobounded,
idealistic action by isometric embeddings (which must in fact be isometries)
on a geodesic semimetric space. By Theorem~\ref{thm_svarc}, any space on
which $M$ so acts must be quasi-isometric to $M$, and hence quasi-metric.
\end{proof}

Recall that a subsemigroup $S$ of a semigroup $T$ is called \textit{left
unitary} if whenever $s \in S$ and $t \in T$ are such that $st \in S$, we
have also $t \in S$. The following theorem, which provides a generalisation of the well-known
fact that a finitely generated group is quasi-isometric to each of its
finite index subgroups, is an typical illustration of how Theorem~\ref{thm_svarc}
may be applied.

\begin{theorem}\label{thm_submonoid}
Let $M$ be a left unitary submonoid of a finitely generated
left cancellative monoid $N$ of finite geometric type, and suppose
there is a finite set $P \subseteq N$ of right units such that $MP = N$. Then $M$ is finitely generated and
quasi-isometric to $N$.
\end{theorem}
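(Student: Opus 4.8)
The plan is to apply Theorem~\ref{thm_svarc} by having $M$ act on a geodesic semimetric space quasi-isometric to $N$, and checking that this action satisfies the three hypotheses of that theorem. The natural candidate space is the continuous Cayley graph $\Gamma_T(N)$ of the overmonoid $N$ with respect to some finite generating set $T$: by Proposition~\ref{prop_actiononcayley}, $N$ (being left cancellative of finite geometric type) already acts on $\Gamma_T(N)$ outward properly, coboundedly and idealistically by isometric embeddings, and by Proposition~\ref{prop_monoidcayleyinclusion} the space $\Gamma_T(N)$ is quasi-isometric to $N$. The submonoid $M$ inherits an action by restriction, and since $M \leq N$ acts by isometric embeddings, the restricted action is certainly by isometric embeddings too. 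So the real work is to verify that the \emph{restricted} action of $M$ is still outward proper, cobounded and idealistic. Outward properness is immediate, since the relevant sets $\{m \in M \mid d(B,mB)=0\}$ are subsets of the corresponding finite sets $\{n \in N \mid d(B,nB)=0\}$ for the ambient action.

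\smallskip

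I would next address coboundedness, which is where the hypotheses $MP = N$ and $P$ being a finite set of right units come into play. Fix a strong ball $B$ of finite radius whose $N$-translates cover $\Gamma_T(N)$. I want a single strong ball $B'$ of finite radius whose $M$-translates cover the space. The idea is to absorb the finitely many coset representatives $P = \{p_1,\dots,p_k\}$ into an enlarged ball. Given an arbitrary point $x$, choose $n \in N$ with $x \in nB$; writing $n = m p_i$ with $m \in M$ and $p_i \in P$, we get $x \in m(p_i B)$. Since $P$ is finite and each $p_i$ moves $B$ a bounded distance, the union $\bigcup_i p_i B$ sits inside a strong ball $B'$ of finite radius based at a suitable point; hence $x \in m B'$ and the $M$-translates of $B'$ cover the space. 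The one subtlety here is that $p_i B$ need not be a strong ball, but it is contained in one of finite radius because $p_i$ acts by an isometric embedding and $B$ has finite radius, so enlarging to a strong ball $B'$ containing all the $p_i B$ is routine.

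\smallskip

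The main obstacle, and the step that genuinely uses the left unitary hypothesis, is verifying that the restricted action is \emph{idealistic at some basepoint}. The ambient action of $N$ is idealistic at the identity $e$, so $d(me, ne) < \infty$ for $m,n \in M$ forces $nN \subseteq mN$; what I need instead is the stronger conclusion $nM \subseteq mM$ within $M$. From $nN \subseteq mN$ I obtain $n = mw$ for some $w \in N$ (using that $d(me,ne)<\infty$ gives a word in $N$ from $m$ to $n$). Here the left unitary property of $M$ in $N$ is exactly what converts $mw \in M$ with $m \in M$ into $w \in M$, whence $n = mw$ with $w \in M$ gives $nM \subseteq mM$ as required. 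Thus $M$ acts idealistically at $e$, which remains a basepoint for the restricted action since distances are unchanged. Having verified all three hypotheses, Theorem~\ref{thm_svarc} yields that $M$ is finitely generated and quasi-isometric to $\Gamma_T(N)$, and since $\Gamma_T(N)$ is quasi-isometric to $N$ and quasi-isometry is transitive, $M$ is quasi-isometric to $N$, completing the proof.
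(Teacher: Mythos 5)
Your proposal is correct and takes essentially the same route as the paper: restrict the $N$-action on the continuous Cayley graph to $M$, inherit outward properness for free, absorb the finitely many right units of $P$ into an enlarged strong ball for coboundedness, and use left unitarity to upgrade $nN \subseteq mN$ to $nM \subseteq mM$ for idealisticity at $e$. The one step you justify only loosely---why the sets $p_iB$ all fit inside a single strong ball of finite radius around $e$---is exactly where the paper invokes the right-unit hypothesis to get $d(p_i,e)<\infty$ alongside $d(e,p_i)<\infty$, and you correctly flag that this is where that hypothesis enters.
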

\begin{proof}
Consider the left translation actions of both $N$ and $M$ on $\Gamma_S(N)$.
By Proposition~\ref{prop_actiononcayley} the action of $N$ is outward proper and by
isometric embeddings, from which it follows immediately that the action of
$M$ is outward proper and by isometric embeddings.

Since
the action of $N$ is cobounded, there is a strong ball $B$ of finite radius
whose translates by elements of $N$ cover $\Gamma_S(N)$. By a simple argument
(or from the proof of Proposition~\ref{prop_actiononcayley}) we may assume
that this ball is centred at the identity $e$ of $N$. Since $P$ is finite
and consists of right units, it is contained in a strong ball of finite radius
around $e$ in $N$, and hence (since the embedding of $N$ into $\Gamma_S(N)$ is a
quasi-isometry) also in $\Gamma_S(N)$. Consider now the
set $PB$. Then for any $x \in P$ and $b \in B$ we have
$$d(e,xb) \leq d(e,x) + d(x,xb) = d(e,x) + d(e,b)$$
and similarly
$$d(xb,e) \leq d(xb,x) + d(x,e) = d(b,e) + d(x,e).$$
Since $P$ and $B$ are both contained in strong balls of finite radius
around $e$, it follows that $PB$ is
contained in a strong ball ($C$ say) of finite radius around $e$. Now for
any point $y \in \Gamma_S(N)$ we have $y = nb$ for some $n \in N$ and
$b \in B$. But $n = mx$ for some $m \in M$ and $x \in P$. Hence,
$y = mxb \in mPB \subseteq mC$, so the translates of $C$ by elements of
$M$ cover $\Gamma_S(N)$, and the action of $M$ is cobounded.

Finally, we claim that the action of $M$ is idealistic at $e$, considered
as a point in $\Gamma_S(N)$. Indeed, suppose
$m, n \in M$ are such that $d(me, ne) < \infty$ in $\Gamma_S(N)$.
By Proposition~\ref{prop_actiononcayley}, the action of $N$ is idealistic
at $e$,
so we have
$nN \subseteq mN$. Then there is an $s \in N$ such that $ms = n$. Since
$m \in M$, $ms = n \in M$ and $M$ is left unitary, we deduce that 
$s \in M$, whereupon $nM \subseteq mM$, as required.

Thus Theorem~\ref{thm_svarc} applies, and tells us that $M$ is finitely
generated and quasi-isometric to $\Gamma_S(N)$, which by 
Proposition~\ref{prop_monoidcayleyinclusion} is quasi-isometric to $N$.
\end{proof}

Theorem~\ref{thm_submonoid} allows us, for example, to completely describe
the quasi-isometry types of free products of finitely generated free
monoids and finite groups.

\begin{corollary}
Let $F$ be a finitely generated free monoid of rank $r$ and $G$ a finite
group. Then the free product $F * G$ is quasi-isometric to a free monoid of rank $r|G|$.
\end{corollary}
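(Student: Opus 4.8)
The plan is to realise the free monoid of rank $r|G|$ as a left unitary submonoid of $N = F * G$ meeting the hypotheses of Theorem~\ref{thm_submonoid}, and then simply read off the quasi-isometry. Write $F$ as the free monoid on $A = \{x_1, \dots, x_r\}$, so that $N$ is generated by the finite set $A \cup G$. First I would check that $N$ satisfies the standing hypotheses: the inclusions $F \hookrightarrow \mathbb{F}_r$ and $G = G$ induce, via normal forms, an embedding of $N$ into the group $\mathbb{F}_r * G$, so $N$ is a submonoid of a group and hence cancellative; in particular it is left cancellative and, being right cancellative, of finite geometric type. Taking $P = G$ provides a finite set of units (a fortiori right units) of $N$.

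Next I would define the candidate submonoid using the normal form theorem for free products, by which every element of $N$ is uniquely an alternating product of nontrivial syllables drawn from $F$ and from $G$. Let $M$ consist of the identity together with all elements whose normal form ends in an $F$-syllable. Because multiplication in a free product affects only the syllables meeting at the junction, the final syllable of a product $uv$ is the final syllable of $v$ whenever $v \neq 1$; from this ``no cancellation beyond the junction'' principle it follows at once that $M$ is a submonoid and that $M$ is left unitary. The same observation gives $MG = N$: an element ending in an $F$-syllable already lies in $M = M \cdot 1$, while one whose last syllable is $h \in G \setminus \{1\}$ equals $(w h^{-1}) \cdot h$ with $w h^{-1} \in M$, and $1 = 1 \cdot 1$.

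Finally I would identify $M$ as free of rank $r|G|$ on the generators $a_{g,i} = g x_i$, for $g \in G$ and $1 \leq i \leq r$, which are $r|G|$ distinct elements of $M$. Reading the normal form of a nonidentity element of $M$ letter by letter over $A \cup (G \setminus \{1\})$, so that $F$-syllables are split into single generators $x_i$, one obtains a word that ends in an $A$-letter and contains no two adjacent $G$-letters; greedily consuming each optional $G$-letter together with the following (necessarily present) $A$-letter then yields a factorisation into the $a_{g,i}$, and at each position this factor is forced, so the factorisation is unique. This unique parsing shows simultaneously that the $a_{g,i}$ generate $M$ and that $M$ is free on them. With all hypotheses in place, Theorem~\ref{thm_submonoid} tells us that $M$, a free monoid of rank $r|G|$, is quasi-isometric to $N = F * G$; since quasi-isometry is an equivalence relation, the corollary follows.

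I expect the main obstacle to be the normal-form bookkeeping, and specifically the verification that the parsing into the generators $a_{g,i}$ is unique, that is, that $M$ is genuinely \emph{free} of rank $r|G|$ rather than merely generated by $r|G|$ elements. The left unitary condition and the identity $MG = N$ are comparatively routine once the junction principle is isolated, and the reduction of $N$'s cancellativity and finite geometric type to the embedding $N \hookrightarrow \mathbb{F}_r * G$ is essentially immediate.
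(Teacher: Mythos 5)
Your proposal is correct and follows essentially the same route as the paper: the same submonoid $M$ of elements whose normal form ends in an $F$-syllable, the same free generating set $\{gx_i \mid g \in G,\ 1 \leq i \leq r\}$ of size $r|G|$, and the same application of Theorem~\ref{thm_submonoid} with $P = G$. The only immaterial difference is that you justify cancellativity of $F*G$ by embedding it into the group $\mathbb{F}_r * G$, whereas the paper cites a result of Cheng and Wong.
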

\begin{proof}
It is well known (see for example \cite[Lemma~1]{Cheng82}) that the free
product $F * G$ is cancellative.
Let $\lbrace f_1, \dots, f_r \rbrace$ be the free generating set for $F$.
It is easy to show that every element of $F*G$ can be written uniquely as
an alternating product of the form
$$g_0 x_1 g_1 x_2 g_2 \dots x_n g_n$$
where $n \geq 0$, each $g_i \in G$ and each $x_i \in \lbrace f_1, \dots, f_r \rbrace$.
Let $M \subseteq F*G$ be the set of elements which admit decompositions as
above with $g_n$ the identity element of $G$. It is readily seen that $M$ is
a left unitary
submonoid of $F*G$, and it also follows easily from the uniqueness of the
above decompositions that $M$ is freely generated by the elements of the
form $g f_i$ for $g \in G$ and $1 \leq i \leq r$, and hence is free of rank
$r|G|$. Finally, $G$ is a finite set of right units in $F*G$, and $MG = F*G$.
Hence, by Theorem~\ref{thm_submonoid} we may conclude that $F*G$ is
quasi-isometric to $M$, which is a free monoid of rank $r|G|$.
\end{proof}

\bibliographystyle{plain}

\def\cprime{$'$} \def\cprime{$'$}

\end{document}